\theoremstyle{definition}
\newtheorem{theorem}{Theorem}
\newtheorem{corollary}{Corollary}
\newtheorem{remarks}{Remarks}
\newtheorem{lemma}{Lemma}
\begin{document}
\author{Gao Mou}
\address{School of Physical and Mathematical Sciences, Nanyang Technological University, Singapore} 
\email{gaom0002@e.ntu.edu.sg}

\title{Hamiltonian prisms on 5-chordal graphs}

\maketitle

\begin{abstract}
In this paper, we provide a method to find a Hamiltonian cycle in the prism of a 5-chordal graph, which is $(1+\epsilon)$-tough, with some special conditions.\end{abstract}

\section{Preliminaries}
The toughness of graphs is a concept introduced by Chv{\'a}tal \cite{chvatal1973tough}, when he was doing research on Hamiltonicity of graphs.
A graph $G$ is called {\em$\beta$-tough}, if for any $p\ge2$, it cannot be split into $p$ components by deleting less than $n\beta$ vertices.
It is not difficult to prove that every $k$-tough graph is $2k$-connected.
A graph $G$ is called {\em$k$-chordal}, if for any circle $C$ in $G$ with length $|C|\ge k$, $C$ has a chord in $G$. Usually, we call a 3-chordal graph a chordal graph for convenience. Clearly, a chordal graph is also $k$-chordal for any $k\ge3$. 

Chv{\'a}tal posed a famous conjecture, which is still open today, saying that there exists a constant $\beta$ such that every $\beta$-tough graph is Hamiltonian. Clearly, being 1-tough is a necessary condition for being Hamiltonian. What is more, there exist 2-tough graphs which are not Hamiltonian \cite{bauer2000not}. In recent decades, researchers found many special classes of graphs for which Chv{\'a}tal's conjecture is true, for example, chordal graphs \cite{Kabela201510}, $2K_2$-free graphs \cite{broersma2014toughness}, and planar graphs \cite{Tutte1956A}.
Moreover, researchers are also interested in many kinds of analogies of Hamiltonian cycles, such as $k$-walks, Hamiltonian-prisms, and 2-factors.
A {\em$k$-walk} in a graph $G$ is a closed walk visiting each vertex of $G$ at least once but at most $k$ times. Clearly, a Hamiltonian cycle can be considered as a 1-walk. And a $p$-walk is trivially a $q$-walk, for integers $p\le q$.
The {\em prism} over a graph $G$ is the Cartesian product $G\times K_2$ of $G$ with the complete graph $K_2$.
If $G\times K_2$ is Hamiltonian, then we say $G$ is {\em prism-Hamiltonian}, and we call $G\times K_2$ the {\em Hamilton-prism} of $G$.
It is not difficult to prove that being prism-Hamiltonian is a property stronger than admitting 2-walk but weaker than being Hamiltonian \cite{kaiser2007hamilton}.
For $k$-walks in graphs, there is also a well-known open conjecture, which is posed by Jackson and Wormald, saying that every $\frac{1}{k-1}$-tough graph admits a $k$-walk \cite{jackson1990k}.
A {\em2-factor} in a graph is a spanning subgraph, which is consisted of several disjoint cycles. An {\em edge-dominating cycle} $C$ in a graph $G$ is a cycle such that the induced subgraph on $V(G)-V(C)$ contains no edge. 

Here we list several well-known results relative chordal graphs and 5-chordals on the topic of Hamiltonicity.

\begin{theorem}\cite{Kabela201510}
Every 10-tough chordal graph is Hamiltonian.
\end{theorem}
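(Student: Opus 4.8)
The plan is to argue by contradiction via a longest-cycle analysis, using the chordal structure to control how the off-cycle fragments attach to the cycle. Suppose $G$ is a $10$-tough chordal graph that is not Hamiltonian, and let $C$ be a longest cycle in $G$, given a cyclic orientation. Since a $10$-tough graph is $20$-connected, $C$ must be long and $G-V(C)$ is nonempty; let $H$ be a component of $G-V(C)$ and let $N(H)$ be its set of attachment vertices on $C$. The first step is to show, using chordality, that $N(H)$ is highly structured: because $G$ has no long chordless cycle, the neighbors of $H$ on $C$ cannot be spread out freely, since an arc of $C$ between two attachment vertices together with a path through $H$ would form a long cycle that must then be chorded, and iterating this pins down the local structure of $C$ near each fragment.

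First I would establish an insertion lemma: for each fragment $H$ and each pair $x,y$ of consecutive attachment vertices along $C$, the arc of $C$ from $x$ to $y$ together with $H$ can be rerouted so as to absorb at least one vertex of $H$, unless a specific chordal obstruction occurs. By the maximality of $C$ no such rerouting can exist, so every such arc must realize the obstruction; chordality then forces each obstructed arc, together with the relevant part of $H$, to behave like a clique-type block with a bounded interior.

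Next I would convert these obstructions into a scattering set. Collecting, over all fragments, the attachment vertices together with a bounded number of guard vertices on each obstructed arc, I would delete the resulting set $S$ from $G$. The components of $G-S$ then include every fragment $H$ as a separate piece, together with the residual arcs of $C$, and an amortized count charging each deleted vertex to a created component shows that the number of components of $G-S$ exceeds $|S|/10$. This contradicts $10$-toughness, and it is exactly this charging, bounding the number of guard vertices per arc against the number of fresh components produced, that extracts the constant $10$.

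The hard part will be the insertion lemma together with its charging argument: one must use chordality sharply enough that each obstructed arc is charged only a constant number of deleted vertices while still contributing its own component, and this is delicate because the attachment sets of distinct fragments may overlap and some arcs may be short. I expect the cleanest route is to pass to the clique-tree (subtree-intersection) representation of the chordal graph, so that fragments correspond to subtrees hanging off the portion of the tree carrying $C$; the separators of the clique tree then supply the scattering set directly, and toughness translates into a bound on how unbalanced these separators can be. Optimizing this tree-based count against the cycle structure is what I expect to yield the bound $10$ rather than a weaker constant.
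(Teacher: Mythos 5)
This statement is quoted from the literature (Kabela--Kaiser) and the paper you were given contains no proof of it at all, so there is nothing internal to compare your argument against; it has to be judged on its own. As it stands it is a strategy outline rather than a proof, and the two ingredients that would carry all the weight are only announced, never supplied. The ``insertion lemma'' is not stated precisely: you do not say what the ``specific chordal obstruction'' is, nor why chordality forces each obstructed arc to be ``a clique-type block with a bounded interior.'' Likewise the amortized count that is supposed to produce the constant $10$ is asserted (``an amortized count \ldots shows that the number of components of $G-S$ exceeds $|S|/10$'') with no accounting of how many guard vertices an arc can absorb or how overlapping attachment sets of distinct fragments are handled --- and in the actual Kabela--Kaiser argument the constant $10$ is exactly the output of such a delicate optimization, so this is where the entire difficulty lives. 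Finally, you hedge between two different proofs (longest-cycle insertion versus clique-tree separators) without committing to either, which is a sign that neither has been pushed through.

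There is also a concrete step that would fail as written. After deleting your scattering set $S$, you count ``the residual arcs of $C$'' as components of $G-S$. In a chordal graph the cycle $C$ is saturated with chords, so distinct arcs of $C-S$ are typically joined to one another by chords and do not form separate components; and if $G-V(C)$ has only one fragment $H$ with a small attachment set, deleting $N(H)$ plus a bounded number of guards may produce only two components, which contradicts nothing. To get a genuine toughness violation you must manufacture many components per deleted vertex, and that is precisely what the clique-tree/minimal-separator structure of chordal graphs is used for in the published proof (minimal separators are cliques, and the tree structure lets one charge components to separator vertices globally). Your closing paragraph gestures at this, but gesturing at the clique tree is not the same as running the count; until the insertion lemma is stated and proved and the charging is carried out explicitly, the argument has a gap exactly where the theorem is hard.
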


\begin{theorem}\cite{bauer2000not}
There exists a $(\frac{7}{4}-\epsilon)$-tough chordal non-Hamiltonian graphs, for any $\epsilon>0$.
\end{theorem}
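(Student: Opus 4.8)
The plan is to construct, for every $\epsilon>0$, an explicit chordal graph $G$ that is $(\frac74-\epsilon)$-tough yet has no Hamiltonian cycle, and then to verify the three properties separately: that $G$ is chordal, that its toughness is at least $\frac74-\epsilon$, and that it is non-Hamiltonian. I would build $G$ as a clique-sum of cliques arranged along a tree — a hub clique $K$ together with peripheral cliques attached through small separators — and I would add every peripheral vertex simplicially. Chordality is then automatic: each added vertex has a complete neighborhood, so $G$ admits a perfect elimination ordering; being chordal it is in particular $5$-chordal.

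For non-Hamiltonicity the essential point is that, since the toughness will exceed $1$, the obstruction cannot be a scattering/toughness violation, which only detects failures of $1$-toughness; it must be a genuine routing obstruction. I would argue on a putative Hamiltonian cycle $C$ by recording, for a well-chosen family of separators $S_1, S_2, \dots$, how many maximal subpaths of $C$ lie inside each peripheral block. Because a subpath entering and leaving a block through a separator $S_i$ consumes separator vertices as its endpoints' cycle-neighbors, the number of such subpaths is bounded by the sizes $|S_i|$, and a counting comparison between the number of blocks (together with the peripheral vertices that must be covered inside them) and the total separator capacity yields a contradiction. Designing the attachment pattern so that this capacity is exceeded by exactly one unit is what forces non-Hamiltonicity without dropping the toughness below $1$.

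The toughness lower bound I would obtain by a case analysis over all cutsets $S$. The components of $G-S$ are controlled by how $S$ meets the separators and the hub: a peripheral block becomes detached, or sheds isolated simplicial vertices, only when $S$ contains the entire separator through which it hangs, so $c(G-S)$ can be expressed in terms of the separators fully contained in $S$ and the hub vertices deleted. Minimizing the ratio $|S|/c(G-S)$ over all $S$, I would check that the minimum equals $\frac74$ for the idealized parameters, and then perturb the sizes to pass from $\frac74$ to $\frac74-\epsilon$ while keeping the routing obstruction intact.

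The hard part will be the simultaneous control required by the last two steps, and it is precisely this that pins the constant at $\frac74$ rather than at the split-graph threshold $\frac32$. For split graphs, where every peripheral block is a single vertex and every separator is the whole clique, the analogous trade-off optimizes at $\frac32$; to reach $\frac74$ one must use genuinely nested cliques with separators of intermediate size, so that a single deleted separator isolates relatively few components while still being able to break every cyclic routing. Finding the attachment pattern that sits exactly at this optimum, and verifying both the routing impossibility and the cut bound for that one pattern, is the crux of the argument; the $\epsilon$-perturbation is then a routine rescaling of the parameters.
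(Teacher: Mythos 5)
The first thing to say is that the paper does not prove this statement at all: it is quoted from \cite{bauer2000not} as a known background result, so there is no in-paper argument to compare yours against. Judged on its own terms, your proposal is a strategy outline rather than a proof, and the gap is concrete: the graph is never constructed. You say you would take a hub clique with peripheral cliques glued on through separators and simplicial attachments, but you never fix the number of peripheral blocks, the separator sizes, the block sizes, or how the simplicial vertices are distributed, and all three verification steps depend on exactly those parameters. (Chordality is the one exception --- a clique-sum of cliques along a tree does admit a perfect elimination ordering, so that part is genuinely automatic.) The non-Hamiltonicity argument is reduced to ``a counting comparison \ldots yields a contradiction'' and the toughness bound to ``I would check that the minimum equals $\frac{7}{4}$''; neither claim can be checked, or even be meaningful, without the numbers. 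You yourself identify the choice of attachment pattern ``that sits exactly at this optimum'' as the crux and leave it open, which means the proposal defers precisely the content of the theorem.

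Your surrounding orientation is sound: the obstruction must indeed be a routing obstruction rather than a component count once the toughness exceeds $1$, the standard way to defeat a putative Hamiltonian cycle in such a graph is to compare the number of forced excursions into the peripheral blocks with the total separator capacity, and the contrast with the split-graph threshold of $\frac{3}{2}$ is the right landmark for why the chordal bound lands at $\frac{7}{4}$. But a correct heuristic for where the answer lives is not a proof that a witness exists. To complete this you would need to write down an explicit two-parameter family, prove non-Hamiltonicity by the excursion count for all parameter values in the family, compute the toughness exactly as a function of the parameters, and show it tends to $\frac{7}{4}$ from below --- which is what the cited source does and what is missing here.
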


\begin{theorem}\cite{bohme1999more}
Every $(1+\epsilon)$-tough chordal planar graph is Hamiltonian, for any $\epsilon>0$. 
\end{theorem}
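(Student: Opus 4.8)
The statement is a known result, so what follows is only the route I would take to reprove it. The plan is to combine the sparsity that planarity forces with the decomposition structure that chordality forces, using the toughness bound to control how the pieces fit together.

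First I would record two structural consequences of the hypotheses. Since $G$ is $(1+\epsilon)$-tough, the relation quoted above shows it is $2(1+\epsilon)$-connected, hence $3$-connected (connectivity is an integer exceeding $2$); note in particular that we do \emph{not} get $4$-connectivity, so Tutte's theorem on $4$-connected planar graphs does not apply and chordality must be exploited. Since $G$ is planar it contains no $K_5$, so every clique has at most $4$ vertices; combined with chordality (every minimal vertex separator induces a clique, by Dirac's characterization) this means every minimal separator of $G$ has at most $4$ vertices. Feeding a minimal separator $S$ back into the toughness inequality
\[
|S|\ \ge\ (1+\epsilon)\,c(G-S)
\]
then pins down the decomposition: a separator of size $3$ leaves exactly $2$ components, a separator of size $4$ leaves at most $3$, and there is no separator of size $2$. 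Thus $G$ has the shape of a tree of cliques of order $\le 4$ glued along triangles and $K_4$'s, with bounded branching.

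With this in hand I would argue by induction on $|V(G)|$ through clique cutsets, as is standard for chordal graphs. The base case is a graph with no clique cutset, which for a chordal graph is complete, and $K_n$ with $n\ge 3$ is Hamiltonian. For the inductive step I would strengthen the statement: rather than merely producing a Hamiltonian cycle, I would produce, for each piece $G[C_i\cup S]$ hanging off a separating clique $S$, a spanning linear forest (a system of vertex-disjoint paths covering $C_i$) all of whose endpoints lie in $S$, with the number of paths small enough that the endpoints can be absorbed by the at most $2|S|$ cycle-slots available at $S$. Gluing these forests around the clique $S$---whose vertices may be joined to one another in any order since $S$ is complete---then assembles a single Hamiltonian cycle.

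The main obstacle is exactly this bookkeeping of path-endpoints. Each vertex of $S$ has degree $2$ in the final cycle, so at most $2|S|$ path-ends may be routed through $S$, while the total demand is the sum over the (at most $3$) components of the number of paths needed in each; I must guarantee this never exceeds the supply. This is where toughness does the real work: it is precisely the bound $c(G-S)\le |S|/(1+\epsilon)$ that keeps the number of branches, and hence of simultaneous path-endpoints, below threshold, and (in the planar setting) it rules out the deep ``stacked'' configurations---nested triangles each receiving a private vertex---whose toughness tends to $1$ from above and which would otherwise force more paths than $S$ can accommodate. I would expect the delicate part of a full write-up to be choosing the inductive invariant (how many path-endpoints, and at which vertices of $S$) robustly enough that it is preserved under each of the separator types identified in the first step; and it is the cap of $4$ on clique order, contrasting with the unbounded cliques behind the $7/4$-tough non-Hamiltonian examples, that makes toughness just above $1$ suffice here.
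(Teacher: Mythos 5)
This theorem is not proved in the paper at all: it is imported verbatim from the cited reference \cite{bohme1999more}, so there is no in-paper argument to compare your sketch against. Judged on its own terms, your outline has a genuine gap at its center. You set up an induction on $|V(G)|$ through clique cutsets and propose to apply an inductive hypothesis to each block $G[C_i\cup S]$, but toughness is not inherited by these blocks: deleting the cutset-induction pieces can produce graphs of much smaller toughness (even $1$-tough or less), so the hypothesis ``$(1+\epsilon)$-tough'' is simply unavailable for the recursive call. This is precisely the known obstruction that forces the published toughness-based Hamiltonicity proofs for chordal graphs (the $18$-tough and $10$-tough arguments) to work with a single global clique tree and a global counting argument rather than naive recursion on cutsets. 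Everything therefore hinges on the ``inductive invariant'' controlling path-endpoints at each separator, which you explicitly do not formulate; that invariant \emph{is} the proof, and nothing in the sketch shows it can be chosen so that toughness barely above $1$ suffices.

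Two further points. First, your remark that toughness rules out stacked nested-triangle configurations ``whose toughness tends to $1$ from above'' is backwards: if non-Hamiltonian chordal planar graphs of toughness strictly greater than $1$ existed, they would be counterexamples to the theorem. The sharpness examples have toughness at most $1$, and showing that toughness strictly exceeding $1$ excludes all bad configurations is the content of the theorem, not an auxiliary observation. Second, a small structural simplification you missed (which works in your favor): a minimal separator of size $4$ cannot occur here, since in a chordal graph it would be a $K_4$ with a full component attached on each side, and contracting one such component yields a $K_5$ minor, contradicting planarity. Combined with $3$-connectivity and the toughness bound, every minimal separator is a triangle leaving exactly two components, so the case analysis is cleaner than the one you describe --- but this does not repair the inductive scheme.
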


\begin{theorem}\cite{teska20092}
Every $(\frac{3}{4}+\epsilon)$-tough chordal planar graph admits a 2-walk, for any $\epsilon>0$.
\end{theorem}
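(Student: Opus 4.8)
The plan is to reduce the existence of a 2-walk to the existence of a connected, spanning, even sub-multigraph of maximum degree at most $4$. Indeed, if $H$ is such a sub-multigraph, obtained from $G$ by using each edge at most twice, then an Euler tour of $H$ is a closed walk covering every vertex $v$ exactly $\deg_H(v)/2 \le 2$ times, which is precisely a 2-walk; conversely, a 2-walk yields such an $H$ by recording edge multiplicities. Thus it suffices to build $H$, and the whole argument can be organized as assembling $H$ one piece at a time while keeping every degree in $\{2,4\}$, so that the doubly-visited vertices are exactly those of degree $4$.

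First I would exploit the structure of chordal planar graphs. Since $K_5$ is non-planar, every clique has at most $4$ vertices, so such a graph has treewidth at most $3$ and admits a clique-tree decomposition whose minimal separators are cliques of size $2$, $3$, or $4$. The toughness hypothesis $t \ge \tfrac{3}{4}+\epsilon$ gives $2t$-connectivity, hence $2$-connectivity (note $2t > \tfrac32$), which removes cut vertices but, crucially, does \emph{not} give $3$-connectivity; so I cannot simply invoke a $3$-connected planar 2-walk result and must instead argue along the decomposition. I would then induct along the clique tree, maintaining a partial even subgraph that covers the already-processed bags and meets the current separator with a controlled number of open ends at each separator vertex.

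The quantitative heart of the argument is the toughness count at a branching separator $S$ (a triangle or a $K_4$). Deleting $S$ creates $c(G-S)$ components, and $t$-toughness bounds $c(G-S) \le |S|/t < \tfrac{4}{3}|S|$, where the strict inequality uses $\epsilon>0$. For a triangle this yields $c(G-S)\le 3$, and for a $2$-separator $c(G-S)\le 2$, so the number of branches that must be routed through a given vertex of $S$ is small enough that each such vertex can absorb them while staying at degree $\le 4$. The slack $\epsilon$ is exactly what prevents the bound from being tight: at $t=\tfrac34$ a triangle could carry $4$ components, overloading a separator vertex, which is precisely why the threshold sits at $\tfrac34$.

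The hard part will be the gluing at the small (triangle) separators. I must simultaneously (i) choose, in each branch, a connected even subgraph that leaves the correct parity and degree budget at $S$, and (ii) combine these at $S$ so that the union is connected, even, and still of maximum degree $\le 4$ at every vertex of $S$. This is a local parity/matching feasibility problem whose solvability has to be extracted from the toughness count rather than assumed, and it is complicated by the fact that a single separator vertex may be the unique gateway for two branches, forcing it to degree $4$ (a twice-visited vertex). I expect the bulk of the work to be a careful case analysis over $|S|\in\{2,3,4\}$ and the required parities, with planarity used to limit the cyclic arrangement of branches around $S$ and chordality used to certify that each branch is again chordal planar so that the induction applies.
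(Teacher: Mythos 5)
This statement is quoted from \cite{teska20092}; the paper itself contains no proof of it, so your sketch can only be judged on its own terms, and on those terms it has genuine gaps. Your opening reduction (a closed 2-walk is equivalent to a connected spanning even sub-multigraph with all degrees in $\{2,4\}$) is correct and standard, and the clique-tree setup for chordal planar graphs (cliques of size at most $4$, hence clique separators of size $2$, $3$, or $4$) is also fine. The first serious problem is the induction itself: toughness is a global property and is \emph{not} inherited by the branches of the clique tree. A branch together with its attaching separator may contain cut sets of very low toughness in the subgraph even though $G$ is $(\tfrac34+\epsilon)$-tough, because the deleted set could leave many components in $G$ that are reconnected through the rest of the graph. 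So the sentence ``chordality used to certify that each branch is again chordal planar so that the induction applies'' does not deliver the toughness hypothesis you need on the branch; any correct argument has to apply the toughness inequality only to $G$ itself, using cut sets assembled globally from several separators at once, which changes the whole architecture of the proof.

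The second problem is that the quantitative heart is asserted rather than established. For a $K_4$ separator the bound $c(G-S)<|S|/t$ allows up to $5$ components, a case you do not address; and even for a triangle, the jump from ``at most $3$ components'' to ``each separator vertex can absorb its branches while staying at degree $\le 4$'' is not a counting triviality, since a vertex of degree $4$ serving two branches is permitted and a vertex may also need edges toward the parent side of the decomposition to keep the sub-multigraph connected. Your own text concedes that the gluing at separators is ``a local parity/matching feasibility problem whose solvability has to be extracted from the toughness count rather than assumed'' and that you ``expect the bulk of the work'' to be a case analysis you have not carried out. That deferred case analysis, together with a correct global use of toughness, \emph{is} the theorem; as written the proposal is a plausible plan with the two decisive steps missing.
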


\begin{theorem}\cite{bauer2000chordality}
Every $\frac{3}{2}$-tough 5-chordal graph contains a 2-factor.
\end{theorem}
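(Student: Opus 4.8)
The plan is to derive the $2$-factor from Tutte's $f$-factor theorem with the constant weight $f\equiv 2$. In this special case the theorem reads: $G$ has a $2$-factor if and only if
\[
\delta(S,T):=2|S|+\sum_{t\in T}d_{G-S}(t)-2|T|-q(S,T)\ge 0
\]
for every pair of disjoint sets $S,T\subseteq V(G)$, where $q(S,T)$ is the number of components $C$ of $G-(S\cup T)$ with an odd number of edges to $T$. A short parity computation shows that $\delta(S,T)$ is always even, so if $G$ had no $2$-factor we could fix disjoint $S,T$ with $\delta(S,T)\le -2$, chosen to be extremal, say with $|S|$ minimum and then $|T|$ minimum. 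I would then use the hypotheses that $G$ is $\tfrac32$-tough and $5$-chordal to contradict the existence of such a pair.

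The first ingredient is the counting that already yields the general bound. Standard exchange arguments normalize $(S,T)$: each component counted by $q(S,T)$ sends at least one edge to $T$, and no component has fewer than $2\tau=3$ neighbours in $S\cup T$ (since $\tfrac32$-toughness implies $3$-connectivity). Writing $b=\sum_{t\in T}d_{G-S}(t)$ one checks $b\ge q(S,T)$, and since $S\cup T$ separates $G$ whenever there are at least two components, toughness gives $c\bigl(G-(S\cup T)\bigr)\le\tfrac23|S\cup T|=\tfrac23(|S|+|T|)$. Feeding these estimates into $\delta$ in the manner of the Enomoto--Jackson--Katerinis--Saito argument (which balances $|S|$, $|T|$, $b$ and the component count) recovers a contradiction when $G$ is $2$-tough; the point of the present theorem is that the slack between $\tfrac32$ and $2$ must be paid for by the chordality hypothesis.

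The heart of the proof, and the step I expect to be hardest, is extracting that saving from $5$-chordality. The dangerous configuration for the counting is a large family of odd components each attached to $T$ by only one or two edges and otherwise spread out, because such components cost almost nothing in the degree sum $b$ while each adds one to $q(S,T)$. I would rule this out by building an alternating cycle that runs through several such components and their attachment vertices in $T$ (using the $\ge 3$ neighbours each component has in $S\cup T$ to close it up): if too many components attached to pairwise distinct, pairwise non-adjacent vertices of $T$, this cycle would be chordless and of length at least $5$, contradicting $5$-chordality. Hence the attachment vertices of the odd components must either coincide or be adjacent, i.e.\ they behave like small cliques inside $S\cup T$; this is essentially the statement that a minimal separator of a $5$-chordal graph cannot be too spread out. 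Translating this clique-like behaviour into the counting should replace the naive bound $q(S,T)\le\tfrac23(|S|+|T|)$ by one strong enough to force $\delta(S,T)\ge 0$.

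The main obstacle is therefore the precise bookkeeping in this last step: one has to treat separately the components meeting $T$ in a single edge (which contribute to $q(S,T)$ but almost nothing to $b$), confirm that the extremal normalizations of $(S,T)$ remain available while the alternating cycle is constructed, and verify that the quantitative gain extracted from forbidding chordless cycles of length $\ge 5$ is exactly the factor needed to lower the toughness threshold from $2$ to $\tfrac32$. Getting this constant to come out exactly right, rather than merely improving the bound by an unspecified amount, is the crux of the whole argument.
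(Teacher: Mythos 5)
This statement is quoted in the paper from the literature (\cite{bauer2000chordality}); the paper itself contains no proof of it, so there is nothing internal to compare your attempt against. Judged on its own terms, your proposal chooses the right machinery --- Tutte's $f$-factor theorem with $f\equiv 2$, the parity of $\delta(S,T)$, an extremal choice of $(S,T)$, and the Enomoto--Jackson--Katerinis--Saito style counting that handles the $2$-tough case --- but it stops exactly where the actual theorem begins. The entire content of the result is the passage from toughness $2$ to toughness $\tfrac32$ under $5$-chordality, and your third and fourth paragraphs only assert that such a saving ``should'' be extractable; you explicitly defer the bookkeeping that would force $\delta(S,T)\ge 0$. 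As written, the argument proves (at best) the known $2$-tough theorem and leaves the $\tfrac32$ improvement as a conjecture about your own counting. That is a genuine gap, not a routine verification: the constant $\tfrac32$ is sharp for $5$-chordal graphs, so no slack is available and the structural dividend from chordality has to be computed exactly.

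A second, more local gap sits inside the step you identify as the heart of the proof. To invoke $5$-chordality you must exhibit an \emph{induced} cycle of length at least $5$, but the ``alternating cycle'' you describe --- passing through several odd components and their attachment vertices in $T$ --- is not shown to be chordless. A vertex of $T$ on that cycle may be adjacent to other vertices of $T$ on the cycle, or to internal vertices of a component other than the two it is meant to link, and an attachment vertex could lie in $S$ rather than $T$; any of these produces a chord and destroys the contradiction. You would need to first normalize to components with exactly one or two edges to $T$, choose shortest paths inside each component, and argue (or further normalize $(S,T)$) that the selected $T$-vertices are pairwise non-adjacent and see no other selected component. Until both the induced-cycle construction and the resulting quantitative improvement of the bound on $q(S,T)$ are carried out, the proposal is a plausible roadmap rather than a proof.
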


\section{Main Results}
The main result in this paper is following:
\begin{theorem}\label{mainthms}
Let $G$ be a $(1+\epsilon)$-tough 5-chordal graph, if $G$ contains an edge-dominating cycle, then $G$ is prism-Hamiltonian.
\end{theorem}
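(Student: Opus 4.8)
The plan is to exploit the very rigid structure imposed by an edge-dominating cycle and to build the Hamilton cycle of the prism directly, by ``lifting'' this cycle and then inserting the remaining vertices one at a time. Fix an edge-dominating cycle $C=c_1c_2\cdots c_mc_1$ and set $I:=V(G)\setminus V(C)$. By the definition of an edge-dominating cycle, $I$ is an independent set, so every $u\in I$ has all of its neighbours on $C$. Since the excerpt records that a $k$-tough graph is $2k$-connected, our $G$ is $2(1+\epsilon)$-connected, hence $3$-connected and $\delta(G)\ge 3$; consequently each $u\in I$ has at least three neighbours on $C$. Writing, for $v\in V(G)$, its two prism copies as $v'$ and $v''$ and calling $v'v''$ the \emph{rung} at $v$, the goal is a spanning cycle of $G\times K_2$.

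The skeleton is the standard Hamilton cycle of the subprism $C\times K_2$,
$H_0:\ c_1'c_2'\cdots c_m'\,c_m''c_{m-1}''\cdots c_1'\!c_1'$, which uses every top and bottom edge except the two copies of $c_mc_1$ and the rungs at $c_1$ and $c_m$. I would insert each $u\in I$ by one of two local gadgets. If $u$ has two \emph{consecutive} neighbours $c_i,c_{i+1}$ on $C$, perform an \emph{edge-detour}: replace $c_i'c_{i+1}'$ by $c_i'u'c_{i+1}'$ and $c_i''c_{i+1}''$ by $c_i''u''c_{i+1}''$. If instead $u\sim c_1$ (or $c_m$) one may perform a \emph{rung-detour}, replacing $c_1'c_1''$ by $c_1'u'u''c_1''$. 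The key bookkeeping point, easily checked, is that two edge-detours placed on adjacent edges $c_{i-1}c_i$ and $c_ic_{i+1}$ are compatible, so the only conflict between insertions is using the \emph{same} slot twice; thus each of the $m-1$ admissible edges and the two rungs is a slot that accommodates at most one vertex of $I$.

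The technical heart is to guarantee that every $u\in I$ actually possesses an admissible slot, i.e.\ two consecutive neighbours on $C$ (the rung-detour handles at most two exceptional vertices). Here 5-chordality enters. Choose $C$ to be a \emph{shortest} edge-dominating cycle. Suppose some $u$ has two neighbours $c_a,c_b$ that are consecutive in the cyclic order of $N(u)$ along $C$ but whose joining arc $A$ contains at least two internal vertices. Then $u\,c_a\,A\,c_b\,u$ is a cycle of length $\ge 5$, so it has a chord; since no chord may be incident with $u$ (that would produce a closer neighbour), the chord is a chord of $C$ lying inside $A$. I would use such chords to reroute $C$, shortening the offending arc while preserving the edge-dominating property, and argue that minimality of $|C|$ forces every arc between consecutive neighbours of every $u$ down to a single internal vertex or less — in other words, that each $u$ ends up with two consecutive neighbours. \emph{This rerouting step is the main obstacle:} one must show that a chord-shortcut can always be taken without destroying edge-domination (the vertices expelled from $C$ must remain independent and dominated), and that the simultaneous requirements coming from all the different $u\in I$ do not conflict. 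This is exactly the delicate point hinted at by the ``special conditions'' in the abstract, and I expect it to absorb the bulk of the work.

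Finally I would assemble all insertions at once via a matching argument. Form the bipartite graph between $I$ and the available slots, joining $u$ to slot $c_ic_{i+1}$ when $c_i,c_{i+1}\in N(u)$ (and to a rung slot when applicable). A system of distinct slots exists by Hall's theorem, and the required expansion is supplied by toughness: for every nonempty $I'\subseteq I$, deleting $N_G(I')\subseteq V(C)$ isolates all of $I'$, so $(1+\epsilon)$-toughness forces $|N_G(I')|>|I'|$, which translates (using that each $u$ has three neighbours producing overlapping edge-slots along $C$) into the inequality needed for Hall's condition. A saturating assignment then lets us carry out all edge- and rung-detours on $H_0$ simultaneously, producing a spanning cycle of $G\times K_2$ and hence showing that $G$ is prism-Hamiltonian.
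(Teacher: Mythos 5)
Your proposal takes a genuinely different route from the paper, which simply quotes a lemma of an earlier paper (a $(1+\epsilon)$-tough graph with an even edge-dominating cycle, or an odd one containing a triangle $v_1v_{2q}v_{2q+1}$, is prism-Hamiltonian) and adds only a short induction showing that every odd cycle in a 5-chordal graph contains a triangle $v_iv_{i+2q-1}v_{i+2q}$. Your direct construction, however, has a genuine gap at exactly the point you flag, and the gap is worse than you suggest. Five-chordality says nothing about cycles of length 4, so the cycle $u\,c_a\,A\,c_b\,u$ only forces a chord when the arc $A$ has at least \emph{two} internal vertices; the most your rerouting could ever achieve is that every arc between cyclically consecutive neighbours of $u$ has at most \emph{one} internal vertex. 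That conclusion is fully consistent with $u$ having no pair of consecutive neighbours on $C$ at all (take $N(u)\cap V(C)=\{c_1,c_3,c_5\}$ on a 6-cycle), so your sentence ``a single internal vertex or less --- in other words, two consecutive neighbours'' is a non sequitur. Such a vertex $u$ has no admissible edge-slot, and only two rung slots exist in the whole construction, so the insertion scheme cannot be completed. (You also never verify that a chord-shortcut of $C$ leaves the expelled vertices independent and dominated.)

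The Hall step is likewise unsubstantiated. Toughness does give $|N_G(I')|>|I'|$ for $I'\subseteq I$, but Hall's condition must be checked for the bipartite graph of \emph{slots}, whose neighbourhoods count consecutive pairs in $N(u)\cap V(C)$, not vertices of $N_G(I')$; two vertices of $I'$ whose only consecutive pair is the same edge $c_ic_{i+1}$, with their remaining neighbours scattered along $C$, already show that the claimed ``translation'' from $|N_G(I')|>|I'|$ to the slot-Hall condition is not automatic. In short, the two load-bearing steps of your argument (existence of consecutive neighbours, and the Hall/matching count) are both unproven, and the first is false as stated. The parity phenomenon that the paper actually exploits --- that 5-chordality forces a triangle sitting on indices of the right parities inside any \emph{odd} edge-dominating cycle, which is precisely what the quoted prism lemma needs --- does not appear in your proposal at all.
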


The proof of this theorem is consisted of the following two lemmas, one of which is from \cite{gao2015on}, while another is new.
\begin{lemma}\cite{gao2015on}
Let $G$ be $(1+\epsilon)$-tough, for some $\epsilon>0$.
\begin{enumerate}
\item If $G$ contains an edge-dominating cycle $C$ with even number of vertices, then the prism over $G$ is Hamiltonian.
\item If $G$ contains an edge-dominating cycle $C=v_1v_2\cdots v_{2p+1}v_1$ of odd length, and there are three vertices $v_1$, $v_{2q}$ and $v_{2q+1}$, for some $1\le q\le p$, inducing a triangle in $G$, then the prism over $G$ is Hamiltonian.
\end{enumerate}
\end{lemma}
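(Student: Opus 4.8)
The plan is to work inside the two natural copies $G^1$ and $G^2$ of $G$ in the prism $G\times K_2$, writing $x^1,x^2$ for the two lifts of a vertex $x$ and calling each edge $x^1x^2$ a \emph{rung}. Since $C$ is edge-dominating, the set $I=V(G)\setminus V(C)$ is independent, so every neighbour of a vertex of $I$ lies on $C$; moreover $(1+\epsilon)$-toughness gives connectivity, so each such vertex has a neighbour on $C$. The backbone of the Hamiltonian cycle will be the \emph{zigzag} on the prism of $C$ alone: writing $C=u_1u_2\cdots u_mu_1$, traverse the rung at $u_1$, then a horizontal edge to $u_2$, then the rung at $u_2$, then a horizontal edge to $u_3$, and so on, so that consecutive horizontal edges lie in alternating copies. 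This uses \emph{every} rung of $C$ exactly once, and these rungs are precisely what I will use to swallow the vertices of $I$.

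A parity count shows the zigzag closes into a single spanning cycle of the prism of $C$ exactly when $m$ is even, which is the hypothesis of part (1). To absorb $I$, I attach each $w\in I$ to a rung at one of its neighbours: if $w\sim u_i$, replace the rung $u_i^1u_i^2$ by the detour $u_i^1\,w^1\,w^2\,u_i^2$, which is legal because $wu_i$ is an edge of $G$ and $w^1w^2$ is the rung at $w$. Each detour consumes one rung, so the attachment succeeds as long as the vertices of $I$ can be matched to \emph{distinct} neighbours on $C$, i.e.\ the bipartite adjacency graph between $I$ and $V(C)$ has a matching saturating $I$. I would verify Hall's condition from toughness: if some $S\subseteq I$ had $|N(S)\cap V(C)|<|S|$, then deleting $T=N(S)\cap V(C)$ would isolate every vertex of $S$ (all their neighbours lie in $T$), giving $c(G-T)\ge|S|\ge|T|+1$, whence $(1+\epsilon)$-toughness forces $|T|\ge(1+\epsilon)\,c(G-T)\ge(1+\epsilon)(|T|+1)>|T|$, a contradiction. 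Hence the matching exists and part (1) follows.

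For part (2) the obstruction is real: any cycle of the prism crosses between the two copies an even number of times, so a Hamiltonian cycle must use an even number of rungs, whereas the full zigzag on an odd cycle would use all $m=2p+1$ of them. This is exactly where the triangle on $v_1,v_{2q},v_{2q+1}$ enters. Deleting these three vertices from $C$ leaves two arcs, on the vertex sets $\{v_2,\dots,v_{2q-1}\}$ and $\{v_{2q+2},\dots,v_{2p+1}\}$, of sizes $2q-2$ and $2p-2q$ --- both \emph{even}. I would run the zigzag along each even arc (using all of their rungs, an even number in total) and splice the two pieces together through the prism of the triangle $v_1v_{2q}v_{2q+1}$, routing its six vertices as a Hamiltonian path that enters and leaves in whatever copies the two arc-zigzags require; because the triangle is an odd cycle, its prism supplies exactly the freedom needed to reconcile the copies while keeping the rung-parity even. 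The vertices of $I$ are then absorbed by rung detours as in part (1), matched into $V(C)$ minus the three triangle vertices; the $\epsilon$-slack in the toughness estimate above keeps Hall's condition valid after deleting these $O(1)$ targets.

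I expect the genuine difficulty to be concentrated in the triangle splice of part (2): one must exhibit, for each of the finitely many copy-and-parity patterns produced by the two arc-zigzags, an explicit Hamiltonian path through the six vertices of the triangle's prism with the prescribed end-copies and the correct rung-parity, and then check that the global object is a single spanning cycle rather than several. By contrast, the even case and the reduction of the attachment step to a bipartite matching are routine, and the matching itself is handed to us cleanly by $(1+\epsilon)$-toughness through Hall's theorem with room to spare.
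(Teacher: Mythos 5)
This lemma is quoted from \cite{gao2015on} and the paper contains no proof of it, so there is nothing internal to compare against; judging your argument on its own terms, part (1) is essentially right. The zigzag on an even cycle does close into a Hamiltonian cycle of $C\times K_2$, the detour $u_i^1w^1w^2u_i^2$ is the correct insertion gadget, and your Hall's-condition computation is sound. Note that it actually gives slightly more than you claim: $|N(S)\cap V(C)|\ge(1+\epsilon)|S|>|S|$ forces $|N(S)\cap V(C)|\ge|S|+1$ by integrality, and this surplus of exactly one is what you will need below.

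Part (2) has two genuine problems as written. First, the splice cannot be ``a Hamiltonian path through the six vertices of the triangle's prism'': the two arc-zigzags present \emph{four} attachment points (both ends of each arc), so the six vertices must be covered by \emph{two} disjoint paths, which by the parity constraint use exactly $0$ or $2$ of the three rungs at $v_1,v_{2q},v_{2q+1}$. A working choice is the cycle $v_1^1\,v_2^1\,[\text{zigzag on }v_2\cdots v_{2q-1}]\,v_{2q-1}^1\,v_{2q}^1\,v_{2q}^2\,v_1^2\,v_{2p+1}^2\,[\text{zigzag on }v_{2p+1}\cdots v_{2q+2}]\,v_{2q+2}^2\,v_{2q+1}^2\,v_{2q+1}^1\,v_1^1$, which uses only edges of $C$, the two chords of the triangle, and the rungs at every vertex of $C$ except $v_1$; it has $2p$ rungs and one checks it is a single spanning cycle of $C\times K_2$ (the cases $q=1$ and $q=p$, where one arc is empty, need a separate one-line check). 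Second, and more seriously, your claim that the $\epsilon$-slack keeps Hall's condition valid after forbidding all three triangle vertices is false for small $\epsilon$: toughness only yields $|N(S)\cap V(C)|\ge(1+\epsilon)|S|$, so removing three targets needs $\epsilon|S|\ge3$, and a single vertex $w\in I$ with $N(w)=\{v_1,v_{2q},v_{2q+1}\}$ (degree $3$ is compatible with $(1+\epsilon)$-toughness once $\epsilon\le1/2$) cannot be matched at all. The repair is to make the two fixes cooperate: choose the splice so that the rungs at two of the three triangle vertices are genuinely used as rungs of the cycle (as in the explicit route above, where $v_{2q}$ and $v_{2q+1}$ keep their rungs and hence remain available for detours), so that only the single vertex $v_1$ is forbidden; the integrality surplus $|N(S)\cap V(C)|\ge|S|+1$ then absorbs this one forbidden target and Hall's condition survives.
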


\begin{lemma}
Assuing $C=v_1v_2\cdots v_pv_1$ is a cycle of odd length in a 5-chordal $G$, then there exist three vertices $v_i,v_{i+2q-1},v_{i+2q}$ of $C$ (the index is in module $p$), which induce a triangle in $G$.
\end{lemma}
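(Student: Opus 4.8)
The plan is to induct on the odd length $p$ of $C$. The key preliminary reduction is this: it suffices to produce \emph{any} triangle all three of whose vertices lie on $C$ and two of whose vertices are consecutive along $C$. Indeed, suppose $w=v_i$ is adjacent to two consecutive vertices $v_j,v_{j+1}$. Since $p$ is odd, $2$ is invertible modulo $p$, so the congruence $2q\equiv j+1-i\pmod p$ has a solution $q$ with $1\le q\le p$; for this $q$ we have $\{v_{i+2q-1},v_{i+2q}\}=\{v_j,v_{j+1}\}$, and $\{v_i,v_{i+2q-1},v_{i+2q}\}$ is exactly the required triangle. The base case $p=3$ is immediate, since $C$ is itself a triangle and any two of its vertices are consecutive.

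For the inductive step I would exploit $5$-chordality to find a chord and recurse on a shorter odd cycle. As $p\ge5$, the cycle $C$ has a chord; choose a chord $v_av_b$ cutting off a shortest arc, and let $\ell\ge2$ be the length of that arc. If $\ell=2$ then $v_a,v_{a+1},v_{a+2}$ form a triangle sharing two edges with $C$, and we are done. If $\ell\ge4$, the cycle formed by the short arc together with the chord has length $\ell+1\ge5$, hence has a chord by $5$-chordality; that chord is a chord of $C$ cutting off a strictly shorter arc, contradicting minimality. Thus $\ell=3$, so $v_a,v_{a+1},v_{a+2},v_{a+3}$ induce a $4$-cycle (with $v_av_{a+3}$ the chord) and $C$ has no arc-$2$ chord.

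In this remaining case I would pass to the shorter odd cycle $C'=v_av_{a+3}v_{a+4}\cdots v_{a-1}v_a$ of length $p-2$. By the inductive hypothesis $C'$ carries a triangle whose two vertices consecutive in $C'$ form an edge $e$. If $e$ is an edge of $C$, then its triangle also serves for $C$ and we are finished. The only surviving possibility is that $e$ is the new edge $v_av_{a+3}$, so the triangle is $v_av_{a+3}v_m$ for some $v_m$ on $C'$. If $v_m$ is a $C$-neighbour of $v_a$ or of $v_{a+3}$ we again obtain a triangle sharing a $C$-edge; otherwise one forms the $5$-cycle $v_mv_av_{a+1}v_{a+2}v_{a+3}v_m$ and invokes $5$-chordality in the hope of producing the chord $v_mv_{a+1}$ or $v_mv_{a+2}$, either of which gives the desired triangle.

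The main obstacle is precisely this last step. The $5$-cycle $v_mv_av_{a+1}v_{a+2}v_{a+3}v_m$ already contains the chord $v_av_{a+3}$, so $5$-chordality is satisfied vacuously and need not deliver $v_mv_{a+1}$ or $v_mv_{a+2}$. What must really be controlled are \emph{internal} triangles, whose three vertices are pairwise far apart on $C$ and which therefore meet $C$ in no edge at all; the extreme case is an equally spaced triple, such as the central triangle on $v_1,v_4,v_7$ of a $9$-cycle, which is $5$-chordal yet shares no edge with $C$. Forcing the triangle to be incident to the boundary cycle, rather than to float internally, is the crux, and a bare Euler/parity count on a subdivision of $C$ into triangular and quadrilateral faces (which does show the number of triangular faces is odd, hence positive) is not by itself enough to pin a triangular face onto a boundary edge. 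Surmounting this—presumably through an extra structural input beyond raw $5$-chordality—is where the real work lies.
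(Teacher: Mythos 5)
Your opening reduction (since $2$ is invertible mod $p$, any triangle whose three vertices lie on $C$ with two of them consecutive realises the form $v_i,v_{i+2q-1},v_{i+2q}$) is correct for the statement as written, and your chord-minimisation step is sound. But the ``obstacle'' you describe at the end is not a difficulty to be surmounted: it is an outright counterexample to the lemma, so the gap you honestly flag cannot be closed by any argument. Concretely, let $G$ be the $9$-cycle $C=v_1v_2\cdots v_9v_1$ together with the three chords $v_1v_4$, $v_4v_7$, $v_7v_1$. Every induced cycle of $G$ is one of the three quadrilaterals $v_1v_2v_3v_4v_1$, $v_4v_5v_6v_7v_4$, $v_7v_8v_9v_1v_7$ or the triangle $v_1v_4v_7$ (any induced cycle through a degree-$2$ vertex such as $v_2$ is forced to close up as the quadrilateral containing it, and a cycle avoiding all degree-$2$ vertices lives on $\{v_1,v_4,v_7\}$), so every cycle of length at least $5$ has a chord and $G$ is $5$-chordal. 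Yet the unique triangle of $G$ is $v_1v_4v_7$, whose indices are pairwise $3$ apart mod $9$, so no triple of the form $v_i,v_{i+2q-1},v_{i+2q}$ induces a triangle. The lemma is false.

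You should also know that the paper's own proof breaks at exactly the spot where yours does. It splits $C$ along a chord $v_1v_t$ into $C_1$ and $C_2$, recurses on the odd piece, and asserts that the triangle obtained there ``is also the one we want in $C$.'' But the inductive hypothesis only guarantees a triangle containing an edge of the \emph{sub-cycle}, and that edge may be the chord $v_1v_t$ itself, which is not an edge of $C$ --- precisely the ``new edge'' case you isolate. On the example above the recursion descends from $C_9$ to the $7$-cycle $v_1v_4v_5\cdots v_9v_1$, then to the $5$-cycle $v_1v_4v_5v_6v_7v_1$, and returns the triangle $v_1v_4v_7$, which meets $C$ in no edge. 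So your analysis has in fact located a genuine error in the paper rather than a missing trick on your part; repairing Theorem~\ref{mainthms} requires either strengthening the hypothesis on $G$ or weakening the conclusion of this lemma, and one should further note that even the statement as written (with indices read mod $p$) is weaker than the parity-sensitive triangle $v_1,v_{2q},v_{2q+1}$ demanded by the cited prism lemma.
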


\begin{proof}
If $p=3$, then let $i=1$ and $q=1$, so the result holds trivially.
If $p\ge5$, then by 5-chordality, there must be an chord on $C$. Without loss of generality, we can assume that $v_1$ is one endpoint of a chord $e$.
If another endpoint of $e$ is $v_3$, then we have proved the result.
Otherwise, $e=v_1v_t$ divides $C$ into two cycles: $C_1=v_1v_2\cdots v_tv_1$ and $C_2=v_1v_t\cdots v_{t+1}v_pv_1$.

If $t$ is odd, then by mathematical induction, we can find triangle with the property we want in $C_1$.

If $t$ is even, then we can relabel the vertices of $C_2$ by $v'_1=v_1$, $v'_2=v_t$, $v'_3=v_{t+1}$ and so on. Clearly, this relabelling process does not change the odd-even of the index. So, by mathematical induction, there is a triangle we want in $C_2$, and this triangle is also the one we want in $C$.

\end{proof}
Combining the two lemmas above, we have finished the proof of Theorem \ref{mainthms}

\section{Applications}
Now, we are interested in the question that under what condition, we can find an edge-dominating cycle in a 5-chordal graph?
Here we list several well-known results on this topic.
\begin{lemma}\cite{bondy1980longest}
Let $G$ be a 2-connected graph of order $n$. If $$\delta_3(G):=\min\{\sum_{i\le3}d(x_i)|x_1,x_2,x_3 \text{are independent vertices in }G\}\ge n+2,$$ then all longest cycles in $G$ are edge-dominating.
\end{lemma}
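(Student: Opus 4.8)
The plan is to argue by contradiction, exploiting the maximality of a longest cycle to manufacture three pairwise nonadjacent vertices whose degree sum is forced below $n+2$. First I would suppose that $C$ is a longest cycle of $G$ that fails to be edge-dominating. By the definition of an edge-dominating cycle, $V(G)-V(C)$ then induces at least one edge, so some component $D$ of $G-V(C)$ satisfies $|V(D)|\ge 2$. Fix an orientation of $C$ and, for $v\in V(C)$, write $v^{+}$ and $v^{-}$ for the successor and predecessor of $v$ along $C$. Let $A=N(V(D))\cap V(C)$ be the set of attachment vertices of $D$ on $C$. Since $G$ is $2$-connected and $C$ is a cycle, $D$ cannot hang off a single vertex of $C$, so $|A|=s\ge 2$.

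Next I would record two consequences of the maximality of $C$, both proved by cut-and-splice rerouting. (i) No two attachments are consecutive on $C$; equivalently, for every $a\in A$ the successor $a^{+}$ has no neighbour in $D$. Indeed, if $a$ and $a^{+}$ both had neighbours in $D$, then connectedness of $D$ would supply a path from $a$ to $a^{+}$ through $D$, and replacing the edge $aa^{+}$ of $C$ by this path would yield a strictly longer cycle. (ii) The successor set $A^{+}=\{a^{+}:a\in A\}$ is independent. Indeed, if $a_i^{+}\sim a_j^{+}$ for two distinct attachments, I would take a path $P$ from $a_i$ to $a_j$ with interior in $D$, then build a cycle by following $P$ from $a_i$ to $a_j$, traversing the arc from $a_j$ back to $a_i^{+}$ against the orientation, crossing the edge $a_i^{+}a_j^{+}$, and returning along the complementary arc from $a_j^{+}$ to $a_i$; this cycle covers all of $V(C)$ together with the interior of $P$, hence is strictly longer than $C$, a contradiction.

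With these in hand I would select the independent triple: choose any $u\in V(D)$ and any two distinct successors $a_i^{+},a_j^{+}$, which exist because $s\ge 2$. By (i) the vertex $u$ is nonadjacent to both $a_i^{+}$ and $a_j^{+}$, and by (ii) the two successors are nonadjacent; hence $\{u,a_i^{+},a_j^{+}\}$ is an independent set, and the hypothesis gives $d(u)+d(a_i^{+})+d(a_j^{+})\ge\delta_3(G)\ge n+2$.

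The remaining task, and the genuine obstacle, is to contradict this by showing $d(u)+d(a_i^{+})+d(a_j^{+})\le n+1$. For $u$ the bound $d(u)\le(|V(D)|-1)+|N(u)\cap V(C)|$ is immediate, since every neighbour of $u$ lies in $V(D)\setminus\{u\}$ or is an attachment. The hard part is to control the two successors: here I would again use the longest-cycle property as an insertion device, arguing that whenever $a_i^{+}$ (respectively $a_j^{+}$) is adjacent to a vertex $w\in V(C)$, maximality forbids $a_j^{+}$ (respectively $a_i^{+}$) from being adjacent to a prescribed shifted copy of $w$, and that the attachments of $u$ block still further positions. This would yield an injective charging of $N(u)\cup N(a_i^{+})\cup N(a_j^{+})$ into pairwise disjoint regions of $V(G)$ of total size at most $n+1$, completing the contradiction. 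Making this charging precise — in particular verifying that the three neighbourhoods overlap in at most one vertex once all the exchange constraints are imposed — is where essentially all the work lies; the rerouting steps (i) and (ii) are routine once the orientation bookkeeping is fixed.
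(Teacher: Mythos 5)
Your setup is the classical route to this theorem of Bondy, and the parts you actually prove are correct: taking a longest cycle $C$, a component $D$ of $G-V(C)$ with $|V(D)|\ge2$, the attachment set $A$ with $|A|\ge2$ by $2$-connectivity, and the two rerouting facts (no attachment is followed by an attachment; the successor set $A^{+}$ is independent) are all sound, and your cut-and-splice arguments for (i) and (ii) are the right ones. The triple $\{u,a_i^{+},a_j^{+}\}$ is then indeed independent, so the hypothesis forces $d(u)+d(a_i^{+})+d(a_j^{+})\ge n+2$. For calibration: the paper does not prove this lemma at all --- it is imported verbatim from Bondy's 1980 paper as a black box --- so the only question is whether your argument stands on its own.

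It does not, because the entire content of the theorem is the step you defer. You assert that maximality of $C$ yields an injective charging of $N(u)\cup N(a_i^{+})\cup N(a_j^{+})$ into disjoint regions of total size at most $n+1$, but no such injection is constructed, and constructing one is not routine bookkeeping. Concretely, one needs at least: that $a_i^{+}$ and $a_j^{+}$ have no common neighbour outside $V(C)$ (via a longer cycle through the $D$-path from $a_i$ to $a_j$ and the common neighbour); crossing-chord exclusions of the form ``if $v\in N(a_i^{+})$ then a prescribed shift of $v$ is not in $N(a_j^{+})$'', proved separately on the two arcs of $C$ determined by $a_i$ and $a_j$, since the direction of the shift differs on the two arcs; and control of the overlap $N(u)\cap\bigl(N(a_i^{+})\cup N(a_j^{+})\bigr)$, which lies in $A$ but can still be double-counted. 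Moreover, it is not clear the count closes at $n+1$ with $u\in V(D)$ and $a_i,a_j\in A$ chosen arbitrarily: in Bondy's argument the vertex of $D$ and the pair of attachments are chosen compatibly (the attachments used are joined to the chosen vertex by paths inside $D$ through prescribed vertices), and this compatibility is precisely what makes the extra exchange cycles available and the charging lossless --- the threshold $n+2$ leaves no slack. As written, your proposal establishes the easy half and replaces the decisive degree count, the only place the hypothesis $\delta_3(G)\ge n+2$ enters, with a declaration that it can be done; that is a genuine gap.
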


Two edges are called {\em remote} if they are disjoint and there is no edge joining them. Let $N_G(v)$ stand for the set of vertices in $G$ which are adjacent to $v$. For an edge $e=uv$ in $G$, we define the {\em degree} $d(e)$ of $e$ by $d(e)=|N_G(u)\cup N_G(v)-\{u,v\}|$.
\begin{lemma}\cite{veldman83}
Let $G$ be a $k$-connected graph ($k\ge2$) such that, for every $k+1$ mutually remote edges $e_0,e_1,\ldots,e_k$ of $G$,
$$\sum^k_{i=0}d(e_i)>\frac{1}{2}k(|V(G)|-k).$$
Then $G$ contains an edge-dominating cycle.
\end{lemma}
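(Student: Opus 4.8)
The natural approach is a longest-cycle argument by contradiction. Suppose $G$ has no edge-dominating cycle, write $n=|V(G)|$, and let $C$ be a longest cycle of $G$; this exists because the hypotheses force $G$ to be $2$-connected. Since $C$ is not edge-dominating, $V(G)-V(C)$ spans an edge, so some component $H$ of $G-V(C)$ contains an edge $e_0=x_0y_0$, and in particular $|V(H)|\ge 2$. The goal is to manufacture $k+1$ mutually remote edges whose degree sum is at most $\frac{1}{2}k(n-k)$, directly contradicting the hypothesis and thereby forcing the existence of an edge-dominating cycle.

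First I would fix a cyclic orientation of $C$ and let $S=\{a_1,\dots,a_s\}$, in cyclic order, be the vertices of $C$ having a neighbour in $H$, writing $a_i^{+}$ for the successor of $a_i$ along $C$. The engine of the whole argument is the maximality of $C$, from which I would extract the standard rerouting (``crossing'') facts: no $a_i^{+}$ lies in $S$ and none is adjacent to $H$ (otherwise a detour through $H$, inserting at least one vertex, would replace the edge $a_ia_i^{+}$ and yield a longer cycle), and the vertices $a_1^{+},\dots,a_s^{+}$ are pairwise non-adjacent (an edge $a_i^{+}a_j^{+}$ would let one cross the two cycle-segments through a path of $H$ and again lengthen $C$). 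The same maximality, using the edge $e_0$ inside $H$, forces the gaps of $C$ between consecutive attachment vertices to be long enough to insert such paths. Since $S$ then separates $H$ from the non-empty set $\{a_1^{+},\dots,a_s^{+}\}\subseteq V(C)-S$, the $k$-connectivity of $G$ gives $s\ge k$.

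Next I would select the $k+1$ remote edges: the edge $e_0=x_0y_0$ inside $H$, together with $k$ edges drawn one from each of $k$ distinct gaps, each chosen to lie strictly between the consecutive attachment vertices bounding that gap. The crossing facts above are exactly what guarantees these $k+1$ edges are pairwise disjoint and pairwise joined by no edge, i.e. mutually remote: remoteness from $e_0$ uses that the interior gap vertices are non-adjacent to $H$, while mutual remoteness of the cycle edges uses the pairwise non-adjacency of the successors, extended to second neighbours along $C$ by further maximality constraints. Finally I would bound the total degree: maximality forbids each chosen edge from meeting the successor set or $H$ in the forbidden way, so charging the neighbourhoods of the $k+1$ edges to the vertices of $G-S$ and counting each charged vertex with the correct multiplicity should give $\sum_{i=0}^{k} d(e_i)\le \frac{1}{2}k(n-k)$.

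The main obstacle is this last step: turning the local rerouting obstructions into the exact global inequality with constant $\frac{1}{2}k(n-k)$. This requires a careful charging argument that both shows each of the $k+1$ neighbourhoods avoids a prescribed set of at least $k$ vertices tied to the successors, and controls the overlaps between these neighbourhoods so that the factor $\frac{1}{2}$ — reflecting that along the cycle a vertex can be ``seen'' from essentially one side of a gap without creating a longer cycle — emerges correctly. Getting the bookkeeping of shared neighbours tight enough to reach the stated constant, rather than a weaker bound, is where the real work lies.
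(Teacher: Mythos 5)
This lemma is quoted in the paper from Veldman's 1983 article without proof, so there is no in-paper argument to compare against; judged on its own terms, your proposal is an outline of the right general strategy (longest cycle, attachment set $S$, remote edges harvested from the gaps, degree-sum contradiction), but it has a genuine gap at exactly the point where the theorem lives. The inequality $\sum_{i=0}^{k} d(e_i)\le \frac{1}{2}k(n-k)$ is asserted, not derived: you say a ``careful charging argument \emph{should} give'' it and then identify this as ``where the real work lies.'' That counting is the entire content of the result --- one must show that each vertex of $G-S$ is counted in the union-neighbourhoods of the $k+1$ chosen edges with average multiplicity at most $k/2$, and this requires pinning down precisely which neighbours each gap edge is forbidden from having (not just the first successors $a_i^{+}$ but longer rerouting obstructions along each segment). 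Without that step the proof does not exist; a sketch that stops at ``the bookkeeping must work out'' cannot be accepted as a proof of a sharp bound.

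There are also two smaller but real holes earlier in the construction. First, you select one edge ``strictly between'' consecutive attachment vertices in each of $k$ gaps, but you have not shown such an edge exists: a gap may contain only one interior vertex unless you prove that every relevant detour through $H$ inserts at least two vertices, which needs the edge $e_0$ together with an argument about distinct attachment neighbours in $H$ (if a single vertex of $H$ is the common neighbour of both ends of a gap, the naive detour inserts only one vertex). Second, mutual remoteness of the gap edges does not follow from the pairwise non-adjacency of the successors $a_i^{+}$ alone; edges chosen deeper inside adjacent gaps require separate maximality arguments to exclude chords between them. Both points are fixable by standard rerouting lemmas, but as written they are gaps, and the central counting inequality is a missing proof rather than a gap.
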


\begin{lemma}\cite{Yoshimoto2008Edge}
Let $G$ be a 2-connected graph. If $d(e_1)+d(e_2)>|V(G)|-4$ for any remote edges $e_1,e_2$, then all longest cycles in $G$ are edge-dominating cycles.
\end{lemma}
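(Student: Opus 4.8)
The plan is to prove the contrapositive: assuming some longest cycle $C$ of $G$ is \emph{not} edge-dominating, I will exhibit two remote edges $e_1,e_2$ with $d(e_1)+d(e_2)\le |V(G)|-4$, contradicting the hypothesis. The starting point is a counting observation that explains the constant $-4$. Write $n=|V(G)|$ and let $e_1=u_1v_1$, $e_2=u_2v_2$ be any remote edges. Since they are disjoint, the four endpoints are distinct, and since no edge joins them we have $u_2,v_2\notin N(u_1)\cup N(v_1)$ and $u_1,v_1\notin N(u_2)\cup N(v_2)$. Hence both $N(e_1)$ and $N(e_2)$ are contained in the $(n-4)$-set $V(G)\setminus\{u_1,v_1,u_2,v_2\}$, so
\[
d(e_1)+d(e_2)=|N(e_1)|+|N(e_2)|=|N(e_1)\cup N(e_2)|+|N(e_1)\cap N(e_2)|\le (n-4)+|N(e_1)\cap N(e_2)|.
\]
Consequently it suffices to find a pair of remote edges that \emph{share no common neighbour}: for such a pair $N(e_1)\cap N(e_2)=\emptyset$, and the displayed inequality gives $d(e_1)+d(e_2)\le n-4$, the desired contradiction.

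Next I set up the configuration. Fix a longest cycle $C$ that is not edge-dominating, so there is an edge with both endpoints in $V(G)\setminus V(C)$; let $H$ be the component of $G-V(C)$ containing such an edge. Because $G$ is $2$-connected, $H$ has at least two neighbours on $C$, and I may choose the off-cycle edge $e_1=xy$ inside $H$ so that (say) $x$ is adjacent to some vertex of $C$; I also fix, via Menger's theorem, a path $P$ through $H$ joining two distinct vertices $s,t\in V(C)$ with all interior vertices in $H$ and containing the edge $xy$. Orient $C$ and for $w\in V(C)$ write $w^{+}$ and $w^{-}$ for its successor and predecessor.

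The heart of the argument is to convert the maximality of $C$ into non-adjacency relations and to read off from them a second edge $e_2$ that is remote from $e_1$ and shares no neighbour with it. The relevant exchange principles are standard for longest cycles: if a vertex of $H$ is adjacent to two cycle vertices $a,b$, then $a^{+}$ and $b^{+}$ are non-adjacent and are themselves non-adjacent to that vertex of $H$, for otherwise rerouting $C$ through $H$ would yield a strictly longer cycle; likewise the interior of $P$ can be inserted into neither arc of $C$ between $s$ and $t$. Applying these insertion and crossing arguments to the attachment points of $H$ and their successors, I will locate $e_2$ either as a successor (gap) edge on $C$ opposite to where $H$ attaches, or as a second edge inside $H$, chosen so that every potential common neighbour of $e_1$ and $e_2$ is one of the forbidden successor vertices.

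The main obstacle is exactly this last step: guaranteeing \emph{simultaneously} that $e_2$ is disjoint from $e_1$, that no edge joins the two, and that they have no common neighbour, all extracted from longest-cycle exchange arguments. The bookkeeping over which successor vertices maximality rules out as shared neighbours is delicate, and a few degenerate configurations -- short gaps between consecutive attachment points, the case of only two attachment points, and very small components $H$ -- will have to be treated separately. Once a pair $e_1,e_2$ with disjoint neighbourhoods has been produced, the counting observation of the first paragraph closes the proof.
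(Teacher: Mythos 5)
There is a genuine gap here, and it sits exactly where you flag it yourself. Your opening reduction is fine: for remote edges the identity $d(e_1)+d(e_2)=|N(e_1)\cup N(e_2)|+|N(e_1)\cap N(e_2)|\le (n-4)+|N(e_1)\cap N(e_2)|$ is correct, and a pair of remote edges with disjoint edge-neighbourhoods would indeed contradict the hypothesis. But the entire content of the lemma is the construction of such a pair, and your proposal defers it: the third paragraph is a list of standard exchange principles (successor non-adjacency, non-insertability of the interior of $P$) together with a promise that bookkeeping will locate $e_2$, and the final paragraph concedes that this is ``the main obstacle.'' A proof that stops at the obstacle is a plan, not a proof. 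Worse, the specific target you set--$N(e_1)\cap N(e_2)=\emptyset$--is stronger than what the longest-cycle exchange arguments deliver, and it is not clear it is attainable at all: the hypothesis $d(e_1)+d(e_2)>n-4$ is perfectly compatible with every pair of remote edges having common neighbours (e.g.\ many attachment vertices of $H$ on $C$ can simultaneously see an endpoint of $e_1$ and an endpoint of a candidate gap edge $e_2$ on $C$). The arguments you invoke forbid particular adjacencies involving successors of attachment points; they do not empty the intersection. The standard way to close such proofs (and the way the cited result of Yoshimoto is actually established) is the opposite bookkeeping: allow $N(e_1)\cap N(e_2)$ to be nonempty and offset it, by exhibiting an injection from common neighbours into vertices lying \emph{outside} $N(e_1)\cup N(e_2)\cup\{u_1,v_1,u_2,v_2\}$ (typically successors of attachment vertices, which maximality of $C$ excludes from both neighbourhoods), so that $|N(e_1)\cup N(e_2)|+|N(e_1)\cap N(e_2)|\le n-4$ still holds. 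That injection is the real combinatorial core, and it is absent from your write-up.

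For context: the paper you are working against offers no proof of this statement either--it is quoted verbatim from the literature (\cite{Yoshimoto2008Edge}) and used as a black box to derive the third corollary. So there is no in-paper argument to compare against; judged on its own, your proposal correctly isolates the counting frame and the relevant machinery, but the step from ``exchange arguments'' to a concrete remote pair satisfying the bound is missing, and the disjoint-neighbourhood shortcut you propose in its place is likely a dead end rather than a simplification.
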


Combining the three lemmas above and Theorem \ref{mainthms}, we get the following corollaries.
\begin{corollary}
Let $G$ be a $(1+\epsilon)$-tough 5-chordal graph of order $n$. If $$\delta_3(G)\ge n+2,$$ then $G$ is prism-Hamiltonian.
\end{corollary}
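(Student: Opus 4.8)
The plan is simply to verify that the two hypotheses of Theorem~\ref{mainthms}---namely, $5$-chordality together with $(1+\epsilon)$-toughness, and the existence of an edge-dominating cycle---are both in hand. The first is given directly, and the degree condition $\delta_3(G)\ge n+2$ will be used to supply the second via the Bondy-type degree lemma recalled at the start of Section~3.

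First I would record that $G$ is $2$-connected. Since $G$ is $(1+\epsilon)$-tough and, as noted in the Preliminaries, every $k$-tough graph is $2k$-connected, $G$ is $2(1+\epsilon)$-connected and hence in particular $2$-connected. This is exactly the connectivity hypothesis demanded by the degree lemma of Bondy stated above, so nothing further is needed to invoke it.

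Next, since $G$ is a $2$-connected graph of order $n$ satisfying $\delta_3(G)\ge n+2$, that lemma guarantees that \emph{every} longest cycle of $G$ is edge-dominating. As a $2$-connected graph certainly contains a cycle, $G$ therefore possesses an edge-dominating cycle $C$. Finally, applying Theorem~\ref{mainthms} to $G$---which is $(1+\epsilon)$-tough and $5$-chordal---together with this edge-dominating cycle $C$ yields that $G$ is prism-Hamiltonian, which is the assertion of the corollary.

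I do not expect any genuine obstacle: the statement is a direct composition of the cited degree lemma with the main theorem. The only step that deserves a line of justification is the passage from $(1+\epsilon)$-toughness to the $2$-connectivity required by the degree lemma, and this follows immediately from the standard toughness-connectivity fact recalled in the Preliminaries. (The analogous corollaries obtained by replacing the degree lemma with the Veldman or Yoshimoto criteria would be proved in precisely the same way, substituting their respective edge-degree hypotheses for $\delta_3(G)\ge n+2$.)
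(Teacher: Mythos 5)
Your proposal is correct and follows exactly the route the paper intends: the paper proves this corollary simply by ``combining'' Bondy's degree lemma with Theorem~\ref{mainthms}, and your argument fills in the same two steps (deriving the required $2$-connectivity from $(1+\epsilon)$-toughness via the standard fact that $k$-tough graphs are $2k$-connected, then extracting an edge-dominating cycle to feed into the main theorem). Your write-up is in fact slightly more careful than the paper's, since it makes the $2$-connectivity verification explicit.
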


\begin{corollary}
Let $G$ be a $(1+\epsilon)$-tough 5-chordal graph such that, for every 3 mutual remote edges $e_1,e_2,e_3$ of $G$,
$$\sum^3_{i=0}d(e_i)>\frac{3}{2}(|V(G)|-3).$$
Then $G$ is prism-Hamiltonian.
\end{corollary}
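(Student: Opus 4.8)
The plan is to read this as a direct specialization of Veldman's lemma (the one from \cite{veldman83}) followed by an application of Theorem~\ref{mainthms}. First I would pin down the value of $k$ in Veldman's lemma that the displayed hypothesis encodes. The right-hand side $\tfrac{3}{2}(|V(G)|-3)$ equals $\tfrac{1}{2}k(|V(G)|-k)$ precisely when $k=3$, and the summation $\sum_{i=0}^{3} d(e_i)$ ranges over four edges $e_0,e_1,e_2,e_3$; so the hypothesis is exactly the $k=3$ instance of that lemma, applied to every quadruple of mutually remote edges. I read the phrase ``$3$ mutual remote edges'' together with the index set $\{0,1,2,3\}$ as describing this quadruple.

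Second, I would verify the connectivity hypothesis needed to invoke the lemma with $k=3$. Since $G$ is $(1+\epsilon)$-tough, the remark in the preliminaries that every $k$-tough graph is $2k$-connected gives that $G$ is $2(1+\epsilon)$-connected. As the connectivity $\kappa(G)$ is an integer and $2(1+\epsilon)>2$, we get $\kappa(G)\ge 3$, so $G$ is $3$-connected. This is the only genuinely non-citational step, and it is where the hypothesis $\epsilon>0$ is actually used.

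With $3$-connectivity in hand and the degree-sum condition assumed, Veldman's lemma yields an edge-dominating cycle $C$ in $G$. Finally, $G$ is $(1+\epsilon)$-tough and $5$-chordal and now contains an edge-dominating cycle, so Theorem~\ref{mainthms} applies verbatim and shows that $G$ is prism-Hamiltonian, which completes the argument.

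The main obstacle is essentially bookkeeping rather than mathematics: one must match the stated bound to the correct index $k=3$ of Veldman's lemma and resolve the apparent mismatch between ``$3$ remote edges'' and the four-term sum, since choosing $k=2$ instead would require the different bound $|V(G)|-2$ and would not match $\tfrac{3}{2}(|V(G)|-3)$. After that identification, and once $3$-connectivity is recorded, the corollary is an immediate concatenation of the cited result with Theorem~\ref{mainthms}.
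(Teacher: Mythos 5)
Your proposal is correct and matches the paper's (implicit) argument exactly: the paper derives this corollary by combining Veldman's lemma with Theorem~\ref{mainthms}, which is precisely your concatenation. You additionally supply the $3$-connectivity check from $(1+\epsilon)$-toughness and correctly diagnose the $k=3$ reading of the hypothesis (the statement's ``3 mutual remote edges'' versus the four-term sum is indeed a typo in the paper), details the paper itself omits.
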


\begin{corollary}
Let $G$ be a $(1+\epsilon)$-tough 5-chordal graph. If $d(e_1)+d(e_2)>|V(G)|-4$ for any remote edges $e_1,e_2$, then $G$ is prism-Hamiltonian.
\end{corollary}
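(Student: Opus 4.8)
The plan is simply to chain the two tools already assembled in this section. The stated hypothesis, $d(e_1)+d(e_2)>|V(G)|-4$ for all remote edges $e_1,e_2$, is verbatim the hypothesis of the degree-sum lemma of \cite{Yoshimoto2008Edge} recalled just above, whose conclusion delivers an edge-dominating cycle; once such a cycle is in hand, Theorem \ref{mainthms} finishes the job. So the real content is only to check that the structural precondition of that lemma is met and that the hypotheses of Theorem \ref{mainthms} all hold.

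First I would confirm that $G$ is $2$-connected, the one hypothesis of the lemma of \cite{Yoshimoto2008Edge} not written explicitly in the corollary. This is immediate from toughness: as noted in the Preliminaries, every $k$-tough graph is $2k$-connected, so the $(1+\epsilon)$-tough graph $G$ is $2(1+\epsilon)$-connected, and since $2(1+\epsilon)>2$ it is in particular $2$-connected (positivity of the toughness also guarantees $G$ is connected with at least three vertices, so a longest cycle exists). With $2$-connectivity established, the lemma applies and shows that every longest cycle of $G$ is edge-dominating; fixing one such cycle $C$ gives an edge-dominating cycle in $G$.

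It then remains to invoke Theorem \ref{mainthms} with this cycle. The graph $G$ is $(1+\epsilon)$-tough and $5$-chordal by assumption, and now carries an edge-dominating cycle, so all three hypotheses of the theorem are satisfied and we conclude that $G$ is prism-Hamiltonian.

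I do not anticipate a substantive obstacle: every nontrivial implication is carried by a result stated earlier, and the argument is really a verification that the hypotheses line up. The only place demanding a second of care is the passage from toughness to $2$-connectivity, together with the harmless degenerate possibility that $G$ is complete---there are then no remote edges, so the degree hypothesis is vacuous, but a complete graph is trivially prism-Hamiltonian and the conclusion holds regardless. Accordingly this is best written as a short corollary-style deduction rather than a full proof.
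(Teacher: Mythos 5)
Your proposal is correct and follows exactly the route the paper intends: deduce $2$-connectivity from the toughness hypothesis, apply the lemma of \cite{Yoshimoto2008Edge} to obtain an edge-dominating (longest) cycle, and then invoke Theorem \ref{mainthms}. The paper states this corollary with no written proof beyond ``combining the lemmas with Theorem \ref{mainthms},'' so your write-up is in fact slightly more careful than the original, particularly in making the toughness-to-connectivity step explicit.
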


\begin{remarks}
The results in this paper is almost trivial now. So, we are looking for some ``non-trivial'' condition for 5-chordal graphs to have edge-dominating cycles. Especially, we hope to find a toughness condition for 5-chordal graphs to have such cycles.
\end{remarks}

\bibliography{treftough}
\bibliographystyle{abbrv}
\end{document}